\documentclass[11pt]{amsart}

\usepackage{hyperref}

\makeatletter
\providecommand{\href}[2]{}
\def\tospace#1{\@tospace#1 \tospace@delimiter}
\def\@tospace#1 #2\tospace@delimiter{#1}
\def\MR#1{\edef\MR@help{{http://www.ams.org/mathscinet-getitem?mr=\tospace{#1}}{\tospace{#1}}}%
\expandafter\href\MR@help .}
\makeatother

\providecommand*{\backref}{}
\providecommand*{\backrefalt}{}
\renewcommand*{\backref}[1]{}
\renewcommand*{\backrefalt}[4]{%
    \ifcase #1 %
    \or
      Cited page #2.
    \else
      Cited pages #2.
    \fi
}

\newcommand{\BV}{\mathcal{BV}}

\newcommand{\dd}{\;{\rm d}}

\newcommand{\N}{\mathbb{N}}

\DeclareMathOperator{\dLeb}{dLeb}

\newcommand{\norm}[1]{\left\| #1 \right\|}

\DeclareMathOperator{\boL}{\mathcal{L}}
\DeclareMathOperator{\boM}{\mathcal{M}}
\DeclareMathOperator{\Lip}{Lip}

\newcommand{\coloneqq}{\mathrel{\mathop:}=}
\newcommand{\eqqcolon}{=\mathrel{\mathop:}}

\newtheorem*{thm}{Main theorem}
\newtheorem{lem}{Lemma}

\theoremstyle{definition}

\hfuzz=5pt
\vfuzz=5pt

\begin{document}

\title[Spectral gap on Lipschitz, not on $\BV$]{An interval map with a spectral gap on Lipschitz functions, but not on bounded variation functions}
\author{S\'{e}bastien Gou\"{e}zel}

\address{IRMAR,
Universit\'{e} de Rennes 1, 35042 Rennes, France}
\email{sebastien.gouezel@univ-rennes1.fr}

\date{September 2, 2008}
\begin{abstract}
We construct a uniformly expanding map of the interval,
preserving Lebesgue measure, such that the corresponding
transfer operator admits a spectral gap on the space of
Lipschitz functions, but does not act continuously on the space
of bounded variation functions.
\end{abstract}

\maketitle

To study the statistical properties of uniformly expanding
maps, a very successful strategy is to find a Banach space on
which the transfer operator of the map admits a spectral gap
(see e.g.~\cite{baladi_decay} and references therein). In
dimension $d\geq 2$, several Banach spaces have been
considered, suited to different situations. However, in one
dimension, it is commonly acknowledged that the space $\BV$ of
bounded variation functions is the best space to work with (see
nevertheless \cite{keller_GeneralVar} for examples of suitable
function spaces when the derivative is only H\"{o}lder continuous).

The purpose of this note is to exhibit a very specific example
showing that, quite surprisingly, the behavior on $\Lip$ can be
better than the behavior on $\BV$: we will construct a
uniformly expanding map $T$ of the interval $I=[0,1)$,
preserving Lebesgue measure, such that the transfer operator
$\boL$ associated to $T$ (defined by duality by $\int f\cdot
g\circ T \dLeb = \int \boL f \cdot g \dLeb$) admits a spectral
gap on the space $\Lip$ of Lipschitz functions, while it does
not act continuously on $\BV$.

The main motivation for this short note is the articles
\cite{durieu1} and \cite{durieu2}, in which the authors prove
results on empirical processes assuming that the transfer
operator admits a spectral gap on $\Lip$. Since similar results
were already known when the transfer operator admits a spectral
gap on $\BV$, our example shows that these articles really have
new applications.

Our example is a Markov map of the interval, with (infinitely
many) full branches. The difficulty in constructing such an
example is that the usual mechanisms that imply continuity on
$\Lip$ also imply continuity on $\BV$: if $\boL_v$ denotes the
part of the transfer operator corresponding to an inverse
branch $v$ of $T$ (so that $\boL=\sum_v \boL_v$), one has
$\norm{\boL_v}_{\BV\to \BV} \leq C \norm{\boL_v}_{\Lip\to
\Lip}$, for some universal constant $C$. To take this fact into
account, our example will therefore involve pairs of branches
$v$ and $w$ such that $\boL_v$ and $\boL_w$ behave badly on
$\Lip$, while the sum $\boL_v+\boL_w$ behaves nicely, thanks to
nontrivial compensations.

\medskip

If $f$ is a function defined on $[0,1)$, we will write
$\norm{f}_{\Lip}=\sup |f| + \Lip(f)$, where $\Lip(f)=\sup
|f(x)-f(y)|/|x-y|$.

Let us fix a sequence $(a_n)_{n\in \N^*}$  of positive real
numbers, with $\sum a_n <1/4$, as well as an integer $N>0$. Our
construction will depend on them, and we will show that a good
choice of these parameters gives the required properties.

Let $I_n=[ 4\sum_{k=1}^{n-1} a_k, 4\sum_{k=1}^n a_k)$. We
decompose this interval (of length $4 a_n$) into two
subintervals of length $2 a_n$ that we denote respectively by
$I_n^{(1)}$ and $I_n^{(2)}$. We have
  \begin{equation*}
  \int_0^1 a_n (1+2\cos^2(2 \pi n^4 x))\dd x=
  \int_0^1 a_n (1+2\sin^2(2 \pi n^4 x))\dd x=
  2 a_n.
  \end{equation*}
We can therefore define two maps $v_n$ and $w_n$ on $[0,1)$,
with respective images $I_n^{(1)}$ and $I_n^{(2)}$, such that
$v_n'(x)=a_n (1+2\cos^2(2 \pi n^4 x))$ and $w_n'(x)=a_n
(1+2\sin^2(2 \pi n^4 x))$. We define $T$ on $I_n$ by imposing
that $v_n$ and $w_n$ are two inverse branches of $T$. It
remains to define $T$ on $J\coloneqq [4\sum_{n=1}^\infty a_n,
1)$. We cut this interval into $N$ subintervals of equal
length, and let $T$ send each of these intervals affinely onto
$[0,1)$.

\begin{thm}
\label{mainthm} If $a_n=1/(100 n^3)$ and $N=4$, then the map
$T$ preserves Lebesgue measure and the associated transfer
operator has a spectral gap on the space of Lipschitz
functions, with a simple eigenvalue at $1$ and no other
eigenvalue of modulus $1$. On the other hand, the transfer
operator does not act continuously on the space of bounded
variation functions.
\end{thm}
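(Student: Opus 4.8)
The plan is to prove a strict contraction of the Lipschitz seminorm under $\boL$, from which the spectral gap follows at once, and then to exhibit an explicit family of test functions witnessing the failure of continuity on $\BV$. Writing the inverse branches on $J$ as $s_1,\dots,s_N$ (affine, each with constant derivative $(1-4\sum a_n)/N$), the transfer operator reads
\begin{equation*}
\boL f(x)=\sum_{n\geq 1}\bigl(v_n'(x)\,f(v_n(x))+w_n'(x)\,f(w_n(x))\bigr)+\sum_{j=1}^N s_j'(x)\,f(s_j(x)).
\end{equation*}
Since $v_n'+w_n'\equiv 4a_n$ and $\sum_j s_j'\equiv 1-4\sum a_n$, summing gives $\boL\mathbf 1=\mathbf 1$, which is exactly the assertion that $T$ preserves Lebesgue measure; dualizing, $\int\boL f\dLeb=\int f\dLeb$, so $\boL$ preserves integrals.

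The core is a Lasota--Yorke-type estimate. Positivity together with $\boL\mathbf 1=\mathbf 1$ immediately yields $\norm{\boL f}_\infty\le\norm f_\infty$. For the seminorm I would differentiate term by term (the defining series and its formal derivative converge uniformly by the bounds below, so $\boL f$ is Lipschitz whenever $f$ is), so that the paired contribution of $v_n,w_n$ to $(\boL f)'$ is
\begin{equation*}
(v_n')'(x)\,f(v_n(x))+(w_n')'(x)\,f(w_n(x))+v_n'(x)^2 f'(v_n(x))+w_n'(x)^2 f'(w_n(x)).
\end{equation*}
The crucial point is that $v_n'+w_n'$ is constant, whence $(w_n')'=-(v_n')'$ and the first two terms collapse to $(v_n')'(x)\,(f(v_n(x))-f(w_n(x)))$. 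As $v_n(x),w_n(x)\in I_n$, an interval of length $4a_n$, this is bounded by $\Lip(f)\,|(v_n')'(x)|\,4a_n\le \Lip(f)\cdot 16\pi n^4 a_n^2$, which for $a_n=1/(100n^3)$ equals $\Lip(f)\cdot(16\pi/10^4)\,n^{-2}$, summable with small total. The surviving expansion terms are bounded by $\Lip(f)$ times $\sum_n((v_n')^2+(w_n')^2)+\sum_j (s_j')^2$; here $(v_n')^2+(w_n')^2\le 10a_n^2$ is negligible, while $\sum_j (s_j')^2=(1-4\sum a_n)^2/N<1/4$ for $N=4$. Adding everything yields $\Lip(\boL f)\le\lambda\,\Lip(f)$ with an explicit $\lambda<1$.

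The spectral gap then follows from the $\boL$-invariant splitting $\Lip=\C\mathbf 1\oplus V$, where $V=\{f:\int f\dLeb=0\}$ is closed and the projection $f\mapsto(\int f\dLeb)\mathbf 1$ is bounded. For $g\in V$, continuity and $\int g\dLeb=0$ force $g$ to vanish at some point, so $\norm g_\infty\le\Lip(g)$; iterating the seminorm contraction on the integral-preserving operator gives $\norm{\boL^n g}_{\Lip}\le 2\Lip(\boL^n g)\le 2\lambda^n\norm g_{\Lip}$. Hence $\boL|_V$ has spectral radius at most $\lambda<1$ while $\boL$ is the identity on $\C\mathbf 1$, which is precisely a spectral gap: $1$ is a simple eigenvalue and the remaining spectrum lies in the disk of radius $\lambda$, so there is no other eigenvalue of modulus $1$.

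For the failure on $\BV$, I would test $\boL$ on $f_m\coloneqq\mathbf 1_{I_m^{(1)}}$, whose $\BV$ norm is bounded independently of $m$. Because the images $I_n^{(1)},I_n^{(2)},J$ are pairwise disjoint, only the branch $v_m$ contributes and $\boL f_m=v_m'$, whose total variation is $\int_0^1|(v_m')'|\dd x=8m^4a_m=2m/25\to\infty$; thus $\boL$ is not bounded on $\BV$. The only genuinely delicate point is the verification that $\lambda<1$ in the seminorm estimate, and this is exactly where the hypotheses enter: $N=4$ keeps the affine contribution $\sum_j (s_j')^2$ below $1/4$, and $a_n=1/(100n^3)$ makes the compensated curvature sum $\sum 16\pi n^4 a_n^2$ both convergent and tiny. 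Everything else—term-by-term differentiation and uniform convergence of the defining series—is routine.
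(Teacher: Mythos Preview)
Your argument is correct and in one respect tighter than the paper's. The compensation mechanism---that $(v_n')'+(w_n')'=0$ collapses the dangerous derivative terms to $(v_n')'(x)\bigl(f(v_n(x))-f(w_n(x))\bigr)$, bounded by $16\pi n^4 a_n^2\,\Lip(f)$---is exactly the key idea of Lemma~\ref{lem_borne_Ln}, and your $\BV$ test function $f_m=\mathbf 1_{I_m^{(1)}}$ is the $k=1$ case of Lemma~\ref{lem:badBV}.

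Where you diverge is in deducing the spectral gap. The paper proves only a Lasota--Yorke inequality $\norm{\boL f}_{\Lip}\le\tfrac34\norm f_{\Lip}+\norm f_{C^0}$ (Lemma~\ref{lemLY}), then invokes compactness of $\Lip\hookrightarrow C^0$ together with Hennion's theorem to bound the essential spectral radius, and finally runs a separate maximum-modulus argument on eigenfunctions to exclude peripheral eigenvalues other than $1$. You instead obtain a genuine contraction of the Lipschitz \emph{seminorm}, $\Lip(\boL f)\le\lambda\,\Lip(f)$ with an explicit $\lambda<1$, and couple it with the $\boL$-invariant splitting $\Lip=\C\mathbf 1\oplus\{f:\int f=0\}$ and the elementary observation that a mean-zero Lipschitz function on $[0,1)$ has a zero, so $\norm g_\infty\le\Lip(g)$ on that subspace. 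This is more self-contained (no quasi-compactness machinery, no separate peripheral-spectrum step) and yields a quantitative gap; the paper's route is the template that survives in settings where only a Lasota--Yorke bound, not a seminorm contraction, is available. The one place to tighten your write-up is the term-by-term differentiation for merely Lipschitz $f$: justify it a.e.\ via Rademacher and the chain rule for composition with the smooth diffeomorphisms $v_n,w_n,s_j$, after which $\Lip(\boL f)=\norm{(\boL f)'}_{L^\infty}$ gives exactly what you need.
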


For the proof, we will work with general values of $a_n$ and
$N$, and specialize them only at the end of the argument.

\begin{lem}
The map $T$ preserves Lebesgue measure.
\end{lem}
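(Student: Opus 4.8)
The plan is to use the duality definition of $\boL$: substituting the constant function $f\equiv 1$ into $\int f\cdot g\circ T\dLeb=\int \boL f\cdot g\dLeb$ gives $\int g\circ T\dLeb=\int \boL 1\cdot g\dLeb$ for every bounded measurable $g$. Hence $T$ preserves Lebesgue measure if and only if $\boL 1=1$, and it suffices to compute $\boL 1$ and check that it is the constant function $1$.

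To do so, I would first record the explicit form of $\boL$ as a sum over the (countably many) inverse branches. Every branch of $T$ is full and increasing, with inverse branches $v_n,w_n$ (onto $I_n^{(1)},I_n^{(2)}$) together with $N$ affine inverse branches $u_1,\dots,u_N$ on $J$ of common slope $(1-4\sum_k a_k)/N$. The change of variables $x=v(y)$ on each branch (with $v'>0$, so no absolute values are needed) yields
\[
\boL f(y)=\sum_{n\geq 1}\bigl(v_n'(y)\,f(v_n(y))+w_n'(y)\,f(w_n(y))\bigr)+\sum_{j=1}^N u_j'(y)\,f(u_j(y)).
\]
Evaluating at $f\equiv 1$ and inserting the prescribed derivatives gives
\[
\boL 1(y)=\sum_{n\geq 1}a_n\bigl(2+2\cos^2(2\pi n^4 y)+2\sin^2(2\pi n^4 y)\bigr)+\Bigl(1-4\sum_{k\geq 1}a_k\Bigr).
\]
The Pythagorean identity collapses each pair to $4a_n$, so the first sum equals $4\sum_n a_n$ and the whole expression is identically $1$. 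This is exactly the compensation between the branches $v$ and $w$ advertised in the introduction, here used to make $\boL$ fix the constants.

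The computation itself is routine; the only points deserving care are preliminary. First, one must check that the branches genuinely partition $[0,1)$: the images $I_n^{(1)},I_n^{(2)}$ tile $[0,4\sum_k a_k)$ by construction, and the hypothesis $\sum a_n<1/4$ guarantees that $J$ is a nondegenerate interval, cut into the $N$ affine pieces. Second, one must justify the interchange of summation and integration in the change-of-variables step and the pointwise convergence of the series defining $\boL 1$; both are immediate, since the series is dominated by $4\sum_n a_n<\infty$, the trigonometric factors being bounded by $1$. With these in place, $\boL 1=1$ follows and the lemma is proved; I do not expect any genuine obstacle here, the interest of the example lying entirely in the later analysis on $\Lip$ and $\BV$.
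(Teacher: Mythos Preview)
Your proposal is correct and is essentially the same argument as the paper's: both reduce the claim to checking that $\boL 1=1$, i.e., that the sum of the derivatives of all inverse branches is identically $1$, and both carry out the same computation using $\cos^2+\sin^2=1$. You are simply more explicit about the duality reasoning and the auxiliary convergence and partition checks, which the paper leaves implicit.
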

\begin{proof}
At a point $x$, the sum of the derivatives of the inverse branches of
$T$ is equal to
  \begin{align*}
  \sum_{n=1}^\infty (v_n'(x)+w_n'(x)) &+ N\cdot |J|/N
  \\&=\sum_{n=1}^\infty a_n (2+ 2\cos^2(2 \pi n^4 x) + 2\sin^2(2\pi
  n^4  x)) + |J|
  \\&= \sum_{n=1}^\infty 4 a_n +|J|
  =1.
  \qedhere
  \end{align*}
\end{proof}

Let $\boL$ be the transfer operator of $T$ associated to
Lebesgue measure, given by $\boL f(x)=\sum_{T(y)=x}
f(y)/T'(y)$. Let also
  \begin{equation*}
  \boL_n f(x)=\sum_{T(y)=x,\;y\in I_n} f(y)/T'(y)=v'_n(x)f(v_n
  x)+w'_n(x)f(w_n x),
  \end{equation*}
and $\boM f(x)=\sum_{T(y)=x,\;y\in J} f(y)/T'(y)$, so that
$\boL=\sum_n \boL_n + \boM$.

\begin{lem}
\label{lem:badBV} Assume that $\limsup n^4 a_n= \infty$. Then,
for any $k\geq 1$, the operator $\boL^k$ does not act
continuously on the space $\BV$ of bounded variation functions.
\end{lem}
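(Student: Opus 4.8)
The plan is to show that $\boL^k$ is unbounded on $\BV$ by exhibiting a sequence of indicator functions, bounded in $\BV$-norm, whose images under $\boL^k$ have total variation tending to infinity. The mechanism is the rapid oscillation built into the branches: writing $\operatorname{Var}$ for total variation on $[0,1)$, the derivative $v_n'(x)=a_n(1+2\cos^2(2\pi n^4 x))$ undergoes on the order of $n^4$ oscillations of amplitude of order $a_n$ as $x$ runs over $[0,1)$, so $\operatorname{Var}(v_n')\geq c\, a_n n^4$ for some universal $c>0$. Under the hypothesis $\limsup n^4 a_n=\infty$ this is unbounded along a subsequence of $n$. First I would record this estimate.

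The key point is to feed $\boL^k$ a function for which the cancellations between the paired branches $v_n$ and $w_n$ — the very cancellations that will later make $\boL$ well behaved — do not occur, so that a single factor $v_n'$ survives intact. To this end, let $u_1,\dots,u_N$ denote the affine inverse branches of $T$ on the $N$ pieces of $J$, each mapping $[0,1)$ affinely onto a subinterval of length $s\coloneqq |J|/N\in(0,1)$, with constant derivative $s$. Set $u\coloneqq u_1\circ\cdots\circ u_1$ ($k-1$ factors), so $u'\equiv s^{k-1}$, and put $B_n\coloneqq u(I_n^{(1)})\subset J$ (when $k=1$, $u$ is the identity and $B_n=I_n^{(1)}$). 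Then $\sigma_n\coloneqq u\circ v_n$ is an inverse branch of $T^k$, since $T^k\circ\sigma_n=\operatorname{id}$, with image $\sigma_n([0,1))=B_n$ and derivative $\sigma_n'=s^{k-1}v_n'$.

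Now take $f_n\coloneqq \mathbf{1}_{B_n}$, so that $\norm{f_n}_{\BV}$ is bounded independently of $n$. Writing $\boL^k f(x)=\sum_\tau |\tau'(x)|\, f(\tau x)$ with $\tau$ ranging over the inverse branches of $T^k$, I would use that the images of the distinct inverse branches of $T^k$ are pairwise disjoint and tile $[0,1)$: since $B_n$ is exactly the image of $\sigma_n$, the only branch for which $\mathbf{1}_{B_n}(\tau x)$ is nonzero is $\tau=\sigma_n$, and for that branch it equals $1$ identically. Hence $\boL^k f_n=s^{k-1}v_n'$, whence $\operatorname{Var}(\boL^k f_n)=s^{k-1}\operatorname{Var}(v_n')\geq c\, s^{k-1} a_n n^4$. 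Choosing $n$ along the subsequence where $n^4 a_n\to\infty$ produces a bounded sequence in $\BV$ with $\norm{\boL^k f_n}_{\BV}\to\infty$, so $\boL^k$ is not continuous on $\BV$.

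The main obstacle is the middle step: correctly isolating a single surviving branch. Everything hinges on choosing the support $B_n$ to coincide exactly with the image of one inverse branch of $T^k$ (the one that applies $v_n$ first and then iterates a fixed affine branch into $J$), so that no paired term $w_n'$ appears to compensate the large variation of $v_n'$. Verifying this requires only the partition property of the branch images of $T^k$ together with the oscillation estimate for $\operatorname{Var}(v_n')$; both are routine, but the conceptual content is that the construction deliberately defeats the usual $\Lip$-to-$\BV$ comparison by never summing $v_n$ against $w_n$.
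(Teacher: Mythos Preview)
Your proposal is correct and is essentially identical to the paper's proof: both take the indicator of $u^{k-1}\circ v_n([0,1))$ for a fixed affine inverse branch $u$ into $J$, observe that this has bounded $\BV$-norm, and compute $\boL^k$ of it to be $(|J|/N)^{k-1}v_n'$, whose variation is of order $a_n n^4$. The only difference is expository: you spell out the partition property of the branch images of $T^k$ more explicitly than the paper does.
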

\begin{proof}
Let us fix an inverse branch $v$ of $T$ with $v(I)\subset J$
($v$ is affine and its slope is $|J|/N$). Fix $k>0$. Let
$\chi_n$ be the characteristic function of the interval
$v^{k-1} \circ v_n(I)$. Its variation is bounded by $2$.
Moreover, $\boL^k \chi_n(x)=a_n (1+2 \cos^2(2 \pi n^4 x))
(|J|/N)^{k-1}$, hence the variation of $\boL^k \chi_n$ is at
least $C(k) a_n n^4$, for some $C(k)>0$. This concludes the
proof.
\end{proof}

The next lemma is the crucial lemma: it ensures that $\boL_n$
behaves well on $\Lip$, while we have seen (in the proof of
Lemma \ref{lem:badBV}) that it is not the case on $\BV$. We
should emphasize that each branch of $\boL_n$ behaves badly on
$\Lip$: this is the addition of the two branches that gives a
better behavior, thanks to the compensations of the bumps of
$v'_n$ and $w'_n$.

\begin{lem}
\label{lem_borne_Ln} We have $\norm{\boL_n f}_{\Lip} \leq a_n
(32\pi n^4 a_n + 8) \norm{f}_{\Lip}$.
\end{lem}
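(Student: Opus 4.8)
The plan is to bound the two pieces of $\norm{\boL_n f}_{\Lip}$ separately, namely $\sup|\boL_n f|$ and $\Lip(\boL_n f)$. The supremum is immediate: since $v_n'(x)+w_n'(x)=a_n(2+2\cos^2(2\pi n^4x)+2\sin^2(2\pi n^4x))=4a_n$ for every $x$, we get $|\boL_n f(x)|\le (v_n'(x)+w_n'(x))\sup|f|=4a_n\sup|f|$.

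For the Lipschitz seminorm I would estimate $\boL_n f(x)-\boL_n f(y)$ and split it as $A+B$, where $A=v_n'(x)[f(v_nx)-f(v_ny)]+w_n'(x)[f(w_nx)-f(w_ny)]$ collects the terms in which $f$ is differenced, and $B=[v_n'(x)-v_n'(y)]f(v_ny)+[w_n'(x)-w_n'(y)]f(w_ny)$ those in which the derivative is differenced. The term $A$ is harmless. Using $|f(v_nx)-f(v_ny)|\le\Lip(f)|v_nx-v_ny|\le 3a_n\Lip(f)|x-y|$ (since $\sup v_n'\le 3a_n$), together with the analogous bound for $w_n$, I can factor out $v_n'(x)+w_n'(x)=4a_n$ to obtain $|A|\le 3a_n\cdot 4a_n\cdot\Lip(f)|x-y|=12a_n^2\Lip(f)|x-y|$.

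The crucial step is the term $B$, and this is where the construction must be used: bounding the two branches of $B$ separately would only give a bound of order $n^4 a_n\sup|f|\,|x-y|$, which is far too large (not even summable in $n$ for the eventual choice $a_n=1/(100 n^3)$). Instead I would exploit the exact compensation built into the definition. Writing $v_n'(t)=a_n(2+\cos(4\pi n^4 t))$ and $w_n'(t)=a_n(2-\cos(4\pi n^4 t))$, we see that $v_n'(x)-v_n'(y)=-(w_n'(x)-w_n'(y))$, so $B$ collapses to $[v_n'(x)-v_n'(y)][f(v_ny)-f(w_ny)]$. Both factors are now small: $|v_n'(x)-v_n'(y)|\le \sup|v_n''|\,|x-y|\le 8\pi n^4 a_n|x-y|$, while $|f(v_ny)-f(w_ny)|\le\Lip(f)|v_ny-w_ny|\le 4a_n\Lip(f)$ because $v_ny$ and $w_ny$ both lie in the interval $I_n$ of length $4a_n$. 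Hence $|B|\le 32\pi n^4 a_n^2\Lip(f)|x-y|$, the extra factor $a_n$ being precisely the effect of the compensation.

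Combining the two estimates gives $\Lip(\boL_n f)\le(12a_n^2+32\pi n^4 a_n^2)\Lip(f)$, hence $\norm{\boL_n f}_{\Lip}\le 4a_n\sup|f|+(12a_n^2+32\pi n^4 a_n^2)\Lip(f)$. Since $\sum a_k<1/4$ forces $a_n<1/4$ and therefore $12a_n^2\le 3a_n$, the coefficient of $\Lip(f)$ is at most $3a_n+32\pi n^4 a_n^2$, and the whole expression is bounded by $(8a_n+32\pi n^4 a_n^2)\norm{f}_{\Lip}=a_n(32\pi n^4 a_n+8)\norm{f}_{\Lip}$, as claimed. The only delicate point is the term $B$; everything hinges on recognizing that the oscillating parts of $v_n'$ and $w_n'$ are negatives of one another, which is exactly why the pair of branches behaves well on $\Lip$ even though, as the proof of Lemma \ref{lem:badBV} shows, each branch individually does not.
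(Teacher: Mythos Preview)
Your proof is correct and rests on the same cancellation as the paper's, but you organise the computation differently. The paper rewrites the operator itself: using $1+2\cos^2\theta=2+\cos 2\theta$ and $1+2\sin^2\theta=2-\cos 2\theta$ it splits $\boL_n f/a_n=(f\circ v_n+3f\circ w_n)+2\cos^2(2\pi n^4\,\cdot\,)\,(f\circ v_n-f\circ w_n)$, so the oscillatory factor is paired from the outset with the small quantity $f(v_n x)-f(w_n x)$, and one then estimates the $\Lip$ norm of each summand. You instead take the standard Leibniz splitting $\boL_n f(x)-\boL_n f(y)=A+B$ and observe that $v_n'(x)-v_n'(y)=-(w_n'(x)-w_n'(y))$, which collapses $B$ to $[v_n'(x)-v_n'(y)][f(v_n y)-f(w_n y)]$. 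This is exactly the same mechanism seen at the level of increments rather than at the level of the formula; your route is perhaps the more direct one if one starts from the transfer-operator expression, while the paper's rewriting makes the structure of the compensation visible before any differencing. Two minor remarks: your bound $\sup|v_n''|\le 8\pi n^4 a_n$ is generous by a factor of $2$ (in fact $v_n''(t)=-4\pi n^4 a_n\sin(4\pi n^4 t)$), and your final step ``the whole expression is bounded by $(8a_n+32\pi n^4 a_n^2)\norm{f}_{\Lip}$'' is justified simply because both coefficients $4a_n$ and $3a_n+32\pi n^4 a_n^2$ are dominated by $8a_n+32\pi n^4 a_n^2$.
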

\begin{proof}
Let us decompose $\boL_n /a_n$ as
  \begin{align*}
  \boL_n f(x)/a_n &= (1+2\cos^2(2\pi n^4 x)) f(v_n x) +  (1+2 \sin^2(2 \pi n^4 x)) f(w_n x)
  \\ &= (f(v_n x)+3 f(w_n x)) + 2 \cos^2 (2\pi n^4 x) (f(v_n x)-f(w_n x))
  \\& \eqqcolon \boL_n^{(1)}f(x)+ 2 \boL_n^{(2)}f(x).
  \end{align*}

We have $\norm{\boL_n^{(1)}f}_{C^0}\leq 4 \norm{f}_{C^0}$.
Moreover, since $v_n$ and $w_n$ are contracting by a factor at
least $3 a_n$, we have $\Lip(\boL_n^{(1)}f) \leq 12 a_n
\Lip(f)\leq 3 \Lip(f)$. Hence, $\norm{\boL_n^{(1)}f}_{\Lip}
\leq 4 \norm{f}_{\Lip}$.

Let us now turn to $\boL_n^{(2)}$. It satisfies
$\norm{\boL_n^{(2)}f}_{C^0}\leq 2 \norm{f}_{C^0}$. Moreover,
for any $x,y$
  \begin{multline*}
  \boL_n^{(2)} f(x)-\boL_n^{(2)}f(y)
  =(\cos^2(2\pi n^4 x) -\cos^2(2\pi n^4 y)) (f(v_n x)-f(w_n x)) \\ + \cos^2(2\pi n^4 y) ((f(v_n x)-f(v_n y))- (f(w_n x)-f(w_n y))).
  \end{multline*}
Since the derivative of $\cos^2(2\pi n^4 x)$ is bounded by
$4\pi n^4$, the first term is at most $4\pi n^4  |x-y| |f(v_n
x)-f(w_n x)|$. The distance between $v_n x$ and $w_n x$ being
at most the length of $I_n$, i.e., $4 a_n$, we therefore get a
bound $16 \pi n^4 a_n |x-y|\Lip(f)$. For the second term,
$\cos^2(2\pi n^4 y)$ is bounded by $1$,  and $|f(v_n x)-f(v_n
y)|\leq 3 a_n \Lip(f)|x-y| \leq \Lip(f)|x-y|$. Similarly,
$|f(w_n x)-f(w_n y)|\leq \Lip(f)|x-y|$. This second term is
therefore bounded by $2 |x-y|\Lip(f)$. We obtain
  \begin{equation*}
  \Lip(\boL_n^{(2)} f) \leq (16\pi n^4 a_n + 2)\Lip(f).
  \end{equation*}
This proves the lemma.
\end{proof}

Let us fix once and for all $a_n=c/ n^3$, with $c$ small enough
so that
  \begin{equation}
  \label{eq:contracteLn}
  \sum_{n=1}^\infty \norm{\boL_n f}_{\Lip} \leq \frac{1}{2} \norm{f}_{\Lip}.
  \end{equation}
Using Lemma \ref{lem_borne_Ln}, one can check that $c=1/100$ is
sufficient.

\begin{lem}
\label{lemLY} If $N\geq 4$, the operator $\boL$ satisfies
  \begin{equation*}
  \norm{\boL f}_{\Lip} \leq \frac{3}{4} \norm{f}_{\Lip} +  \norm{f}_{C^0}.
  \end{equation*}
\end{lem}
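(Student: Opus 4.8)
The plan is to exploit the decomposition $\boL=\sum_n \boL_n + \boM$ together with the triangle inequality for the Lipschitz norm, giving
\[
  \norm{\boL f}_{\Lip} \leq \sum_n \norm{\boL_n f}_{\Lip} + \norm{\boM f}_{\Lip}.
\]
The first sum is already under control: by the choice of the $a_n$ recorded in \eqref{eq:contracteLn}, it is bounded by $\tfrac12 \norm{f}_{\Lip}$. Hence all the remaining work is in estimating the single operator $\boM$, which encodes the $N$ affine branches over $J$. In contrast with the $\boL_n$, there are no compensations to engineer here: $\boM$ is a completely standard uniformly expanding piece, and the argument is the classical Lasota--Yorke computation.

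First I would make the geometry of $\boM$ explicit. The interval $J$ is cut into $N$ pieces of equal length $|J|/N$, each mapped affinely onto $[0,1)$, so the corresponding inverse branches $u_1,\dots,u_N$ are affine with slope $|J|/N$, and $\boM f(x) = (|J|/N)\sum_{i=1}^N f(u_i x)$. From this formula the two pieces of the Lipschitz norm are immediate. Summing $N$ terms each scaled by $|J|/N$ gives $\norm{\boM f}_{C^0} \leq |J|\,\norm{f}_{C^0}$, while the additional factor $|J|/N$ coming from $\Lip(u_i)$ gives $\Lip(\boM f) \leq (|J|^2/N)\,\Lip(f)$.

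Now I would bring in the two hypotheses. Since $|J| = 1 - 4\sum a_n$ and $\sum a_n<1/4$, we have $0<|J|<1$, so $\norm{\boM f}_{C^0}\leq \norm{f}_{C^0}$; combined with $N\geq 4$ this also yields $|J|^2/N \leq 1/4$, whence $\Lip(\boM f) \leq \tfrac14 \Lip(f) \leq \tfrac14 \norm{f}_{\Lip}$. Adding these, $\norm{\boM f}_{\Lip} \leq \norm{f}_{C^0} + \tfrac14 \norm{f}_{\Lip}$, and combining with the tail bound for $\sum_n \boL_n$ produces exactly $\norm{\boL f}_{\Lip} \leq (\tfrac12 + \tfrac14)\norm{f}_{\Lip} + \norm{f}_{C^0}$, as claimed.

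I do not expect a genuine obstacle: once Lemma \ref{lem_borne_Ln} and the summability \eqref{eq:contracteLn} are in hand, the statement is essentially bookkeeping. The only point requiring care is to keep the $C^0$ and Lipschitz contributions of $\boM$ separate, so that the additive term remains $\norm{f}_{C^0}$ with coefficient exactly $1$ while the multiplicative contraction comes out as $3/4$. This is precisely where the two thresholds are used: $\sum a_n<1/4$ (hence $|J|<1$) controls the $C^0$ part, and $N\geq 4$ is exactly what forces the affine part to contract the Lipschitz seminorm by the factor $1/4$ needed to reach $\tfrac12+\tfrac14=\tfrac34$.
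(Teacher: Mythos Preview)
Your proposal is correct and follows essentially the same route as the paper: decompose $\boL=\sum_n \boL_n+\boM$, invoke \eqref{eq:contracteLn} for the first sum, and bound the $C^0$ and Lipschitz parts of $\boM$ separately using $|J|\leq 1$ and $N\geq 4$. The paper's write-up is terser but the computations and the use of the hypotheses are identical.
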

\begin{proof}
The operator $\boM$ satisfies by construction $\norm{\boM
f}_{C^0} \leq |J| \norm{f}_{C^0}$, and $\Lip(\boM f) \leq |J|
\Lip(f) \cdot \frac{|J|}{N}$ (since every inverse branch of $T$
on $J$ contracts by a factor $|J|/N$). With
\eqref{eq:contracteLn}, we obtain
  \begin{equation*}
  \norm{\boL f}_{\Lip} \leq \left( \frac{1}{2}+ \frac{|J|^2}{N}\right) \norm{f}_{\Lip} + |J|  \norm{f}_{C^0}.
  \end{equation*}
This gives the desired conclusion if $N\geq 4$, since $|J|\leq
1$.
\end{proof}

\begin{proof}[Proof of the main theorem]
By Lemma \ref{lem:badBV}, the choice of $a_n$ ensures that the
transfer operator $\boL$ does not act continuously on $\BV$. On
the other hand, since the inclusion of $\Lip$ in $C^0$ is
compact, Lemma \ref{lemLY} and Hennion's theorem show that the
essential spectral radius of $\boL$ acting on $\Lip$ is $\leq
3/4<1$.

It remains to study the eigenvalues of modulus $1$ of $\boL$.
Let $f$ be a nonzero eigenfunction for such an eigenvalue
$\lambda$. Let $x$ be such that $|f(x)|$ is maximal, then
  \begin{multline*}
  |f(x)| = | \boL f (x)|= \left|\sum_{T(y)=x} f(y)/T'(y)\right|
  \\
  \leq \sum_{T(y)=x} |f(y)| / T'(y)
  \leq \sum_{T(y)=x} \sup |f| / T'(y)
  = |f(x)|.
  \end{multline*}
There is equality everywhere in these inequalities. Hence,
$|f(y)|=|f(x)|$ for any preimage $y$ of $x$, and the complex
numbers $f(y)$ all have the same argument. This shows that $f$
is constant on the set $T^{-1}(x)$. Applying the same argument
to $\boL^n$, we see that $f$ is constant on $T^{-n}(x)$. This
set being more and more dense as $n$ tends to infinity, this
shows that $f$ is constant, concluding the proof.
\end{proof}


\bibliography{biblio}
\bibliographystyle{amsalpha}

\end{document}